\theoremstyle{definition}
\newtheorem{thm}{Theorem}
\newtheorem{rem}[thm]{Remark}
\newtheorem{prop}[thm]{Proposition}
\newtheorem{defn}[thm]{Definition}
\newcommand{\bea}{\begin{eqnarray}}
\newcommand{\eea}{\end{eqnarray}}
\newcommand{\nn}{\nonumber}
\newcommand{\cred}[1]{\textcolor{black}{#1}}
\numberwithin{equation}{section}
\numberwithin{thm}{section}
\def\Z{\mathbb{Z}}
\def\C{\mathbb{C}}
\def\R{\mathbb{R}}
\def\CP{\mathbb{CP}}
\def\cM{\mathcal{M}}
\title{Studying complex manifolds by using groups $G_{n}^{k}$ and $\Gamma_{n}^{k}$}
\subjclass[2010]{51H20, 20F36, 57M25, 57M27} 
\keywords{complex manifold, $G_n^k$ group, $\Gamma_n^k$ group, moduli space, fundamental group, braid group}
\author{Vassily Olegovich Manturov}
\address{Bauman Moscow State Technical University, Moscow 105005, Russia; Novosibirsk State University, Novosibirsk 630090, Russia}
\email{vomanturov@yandex.ru}
\author{Zheyan Wan}
\address{Yau Mathematical Sciences Center, Tsinghua University, Beijing 100084, China}
\email{wanzheyan@mail.tsinghua.edu.cn}
\date{}
\begin{document}

%\date{}
\maketitle

\vskip 10pt

\begin{abstract}
In the present paper, we study several complex manifolds by using the following idea.
First, we construct a certain moduli space and study the fundamental group
of this space. This fundamental group is naturally mapped to the groups $G_{n}^{k}$ and $\Gamma_{n}^{k}$.
This is the step towards ``complexification'' of the $G_{n}^{k}$ and $\Gamma_{n}^{k}$ approach first developed in \cite{2019arXiv190508049M}.
\end{abstract}

\section{Introduction}\label{sec:introduction}

In 2015, the first named author \cite{2015arXiv150105208O} defined a 2-parameter family of groups $G_n^k$ for natural numbers $n>k$. Those groups may be regarded as a certain generalization of braid groups. Study of the connection between the groups $G_n^k$ and dynamical systems led to the discovery of the following fundamental principle: 

\emph{``If dynamical systems describing the motion of $n$ particles possess a nice codimension one property governed by exactly $k$ particles, then these dynamical systems admit a topological invariant valued in $G_n^k$''. }

Later the first named author and his colleagues \cite{2019arXiv190211238M,2019arXiv191202695F} introduced and studied the second family of groups, denoted by $\Gamma_n^k$ ($n\ge k\ge4$), which are closely related to triangulations of manifolds. 
Here the vertices of the triangulation play the role of particles and generators correspond to flips of the
triangulation changing its combinatorics.

The nice codimension one property for developing the $\Gamma_n^k$ theory is:

\emph{``$k$ points of the configuration lie on a sphere of dimension $k-3$ and there are no points inside the sphere''.}
 
The main idea behind the $G_{n}^{k}$ and  $\Gamma_{n}^{k}$ approach is that in order
to study fundamental groups of a certain moduli space, we create some natural ``walls''
(codimension one sets) which correspond to generators, with codimension two sets
(intersections of walls) corresponding to the relations. This approach was successfully applied
in \cite{2019arXiv190508049M} for studying manifolds, dynamics, and invariants.

The main difference between $G_{n}^{k}$ and $\Gamma_{n}^{k}$ approach is the construction of
walls: in the case of $\Gamma_{n}^{k}$ the walls are chosen according to some {\em local}
condition. For example, when we study points in $\R^{2}$, the map to $G_{n}^{4}$ deals with
quadruples of points belonging to the same circle (or line), whence $\Gamma_{n}^{4}$ deals
with quadruples of {\em neighbouring} points belonging to the same circle.
 
\section{Basic definitions}\label{sec:definitions}

First, we recall the definition of the groups $G_n^k$ given in \cite{2015arXiv150105208O}.

Consider the following $\binom{n}{k}$ generators $a_m$ where $m$ runs the set of all unordered $k$-tuples $m_1,\dots, m_k$, whereas each $m_i$ are pairwise distinct numbers from $\{1,\dots,n\}$.

For each unordered $(k+1)$-tuple $U$ of distinct indices $u_1,\dots,u_{k+1}\in\{1,\dots,n\}$, consider the $k+1$ sets $m^j=U\setminus\{u_j\}$, $j=1,\dots,k+1$. With $U$, we associate the relation
\bea\label{tetra}
a_{m^1}\cdots a_{m^{k+1}}=a_{m^{k+1}}\cdots a_{m^1}.
\eea
For two tuples $U$ and $\bar{U}$, which differ by order reversal, we get the same relation.

Thus, we totally have $\frac{(k+1)!\binom{n}{k+1}}{2}$ relations.
We shall call them the \emph{tetrahedron relations}.

For $k$-tuples $m,m'$ with $\text{Card}(m\cap m')<k-1$, consider the \emph{far commutativity relation}:
\bea\label{far}
a_ma_{m'}=a_{m'}a_m.
\eea
Note that the far commutativity relation can occur only if $n>k+1$.

Besides that, for all multiindices $m$, we write down the following relation
\bea\label{third}
a_m^2=1.
\eea
\begin{defn}
The $k$-free braid group $G_n^k$ is defined as the quotient group of the free group generated by all $a_m$ for all multiindices $m$ by relations \eqref{tetra}, \eqref{far} and \eqref{third}.
\end{defn}

The $n$-strand planar braid group $B_n(\R^2)$ is generated by $\sigma_1,\dots,\sigma_{n-1}$ and is defined by the relations
$$\sigma_i\sigma_{i+1}\sigma_i=\sigma_{i+1}\sigma_i\sigma_{i+1},\ i=1,2,\dots,n-2,$$
$$\sigma_i\sigma_j=\sigma_j\sigma_i,\ |i-j|>1.$$

There is a homomorphism from $B_n(\R^2)$ to the symmetric group $S_n$, which sends $\sigma_i$ to the transposition $(i,i+1)$. Its kernel is the $n$-strand planar pure braid group $PB_n(\R^2)$. This group is generated by $b_{ij}$, $1\le i<j\le n$, where
$$b_{i,i+1}=\sigma_i^2,$$
$$b_{ij}=\sigma_{j-1}\sigma_{j-2}\cdots\sigma_{i+1}\sigma_i^2\sigma_{i+1}^{-1}\cdots\sigma_{j-2}^{-1}\sigma_{j-1}^{-1},\ i+1<j\le n.$$

The following two propositions (\ref{G_n^3} and \ref{G_n^4}) are based on the two {\em nice codimension one properties:
three points are collinear
and
four points belong to the same circle/line}.

The map from the pure braid group $PB_{n}$ to $G_{n}^{3}, G_{n}^{4}$ is obtained by studying a generic braid and writing down the generators $a_{ijk}$ (respectively, $a_{ijkl}$) where indices correspond to the numbers of points which correspond to the codimension 1 condition.

For each different indices $i,j$, $1\le i,j\le n$, we consider the element $c_{i,j}$ in the group $G_n^3$ to be the product 
$$c_{i,j}=\prod_{k=j+1}^na_{i,j,k}\prod_{k=1}^{j-1}a_{i,j,k}.$$

\begin{prop}[Proposition 3 of \cite{2019arXiv190508049M}]\label{G_n^3}
The map 
$$b_{i,j}\mapsto c_{i,i+1}^{-1}\cdots c_{i,j-1}^{-1}c_{i,j}^2c_{i,j-1}\cdots c_{i,i+1},\ i<j,$$
defines a homomorphism $\phi_n: PB_n(\R^2)\to G_n^3$.
\end{prop}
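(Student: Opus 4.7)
The plan is to verify that the proposed assignment respects the defining relations of the pure braid group $PB_n(\R^2)$, so that it extends from the free group on the $b_{i,j}$ to a well-defined homomorphism. My preferred approach is geometric: it simultaneously explains where the formula comes from and delivers the homomorphism property with minimal algebraic manipulation.

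First I would set up the geometric framework. View a pure braid as a loop in the ordered configuration space $\mathrm{Conf}_n(\R^2)$ of $n$-tuples of distinct points in $\R^2$, and perturb it to be transverse to the codimension-one discriminant $\Delta\subset\mathrm{Conf}_n(\R^2)$ consisting of configurations in which some three of the points are collinear. Reading off, in order, the triples of indices that become collinear yields a word in the generators $a_{ijk}$ of $G_n^3$. The central claim is that this word, regarded modulo the defining relations of $G_n^3$, is a homotopy invariant of the loop; once this is established the induced map $\phi_n$ is automatically a group homomorphism, because concatenation of loops concatenates the associated words.

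The homotopy invariance is verified by analyzing the codimension-two strata of $\Delta$. A configuration in which four points are simultaneously collinear lies in a codimension-two stratum that gives rise to the tetrahedron relation \eqref{tetra} applied to the four triples contained in that quadruple; two disjoint triples becoming collinear simultaneously contribute the far commutativity relation \eqref{far}; and a loop tangent to the codimension-one stratum at a single triple (three points becoming collinear, non-collinear, then collinear again under a small perturbation) yields $a_m^2=1$ as in \eqref{third}. With homotopy invariance in place, it remains to identify the image of each $b_{i,j}$. Taking the standard representative of $b_{i,j}$ in which the $j$-th point moves once around the $i$-th while all others stay fixed, the line through the $i$-th and $j$-th points rotates through a full turn, so each other point lies on this line exactly twice, contributing the factor $c_{i,j}^2$; the conjugating prefix $c_{i,i+1}^{-1}\cdots c_{i,j-1}^{-1}$ and suffix $c_{i,j-1}\cdots c_{i,i+1}$ record the collinearities that occur when the moving point is first displaced into a position adjacent to the $i$-th point and then returned, the ordering of the products in each $c_{i,k}$ being dictated by the angular order in which the remaining points cross the moving line.

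The main obstacle is the classification of the codimension-two strata of $\Delta$ and the verification that each type of local deformation corresponds to exactly one of the relations \eqref{tetra}, \eqref{far}, \eqref{third} of $G_n^3$; this is the technical core of the $G_n^k$ theory. The remaining identification of $\phi_n(b_{i,j})$ with the stated formula is then a bookkeeping computation on a specific representative, and the pure braid relations between the $b_{i,j}$ are preserved automatically by this construction.
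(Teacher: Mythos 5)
Your geometric argument is essentially the approach the paper itself indicates for this imported result: it cites Proposition 3 of \cite{2019arXiv190508049M} and describes the map exactly as you do, namely by reading off the generators $a_{ijk}$ at the collinearity events of a generic braid, with the codimension-one/codimension-two ``walls and relations'' philosophy of the introduction supplying homotopy invariance. One minor refinement: the transverse-intersection strata responsible for far commutativity consist not only of pairs of \emph{disjoint} triples becoming collinear simultaneously but of any two triples with $\mathrm{Card}(m\cap m')<2$ (i.e.\ sharing at most one index), which is precisely the hypothesis of relation \eqref{far}, while two triples sharing two indices collapse into the four-points-collinear stratum governed by the tetrahedron relation \eqref{tetra}.
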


Let $a_{\{i,j,k,l\}}$, $1\le i,j,k,l\le n$, be the generators of the group $G_n^4$, $n>4$.

Let $1\le i<j\le n$. Consider the elements
\bea
c_{ij}^I=\prod_{p=2}^{j-1}\prod_{q=1}^{p-1}a_{\{i,j,p,q\}},
\eea
\bea
c_{ij}^{II}=\prod_{p=1}^{j-1}\prod_{q=1}^{n-j}a_{\{i,j-p,j,j+q\}},
\eea
\bea
c_{ij}^{III}=\prod_{p=1}^{n-j+1}\prod_{q=0}^{n-p+1}a_{\{i,j,n-p,n-q\}},
\eea
\bea
c_{ij}=c_{ij}^{II}c_{ij}^Ic_{ij}^{III}.
\eea

\begin{prop}[Proposition 4 of \cite{2019arXiv190508049M}]\label{G_n^4}
The map 
$$b_{i,j}\mapsto c_{i,i+1}\cdots c_{i,j-1}c_{i,j}^2c_{i,j-1}^{-1}\cdots c_{i,i+1}^{-1},\ i<j,$$
defines a homomorphism $\psi_n: PB_n(\R^2)\to G_n^4$.
\end{prop}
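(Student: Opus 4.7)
The plan is to follow the strategy of Proposition \ref{G_n^3}, replacing the codimension one condition ``three collinear points'' by ``four concyclic or collinear points.''

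First, I would define a geometric map $\tilde\psi_n : PB_n(\R^2) \to G_n^4$ by identifying $PB_n(\R^2)$ with $\pi_1(X_n)$, where $X_n := \mathrm{Conf}_n(\R^2)$ is the ordered configuration space of $n$ distinct labelled points in the plane, based at a configuration in which no four points lie on a common circle or line. For each 4-subset $\{i,j,k,l\}$ let $W_{\{i,j,k,l\}}\subset X_n$ be the real codimension one wall of configurations on which those four points are concyclic (a line counting as a degenerate circle). Along a generic loop $\gamma$, the intersections with $\bigcup W_{\{i,j,k,l\}}$ are finite and transverse; reading off the corresponding generators $a_{\{i,j,k,l\}}$ in the order in which they are crossed defines a word $\tilde\psi_n(\gamma) \in G_n^4$.

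Second, I would verify that $\tilde\psi_n$ descends to a well-defined homomorphism on $\pi_1(X_n) = PB_n(\R^2)$. Tangential crossings of a single wall are absorbed by \eqref{third}. Transverse crossings of two walls $W_m, W_{m'}$ with $|m\cap m'| \le 2$ are absorbed by far commutativity \eqref{far}. The essential case is $|m\cap m'|=3$: the two conditions together force five points to lie on a single circle, so the codimension two stratum is the ``five concyclic'' locus, and the five 4-subsets of this 5-tuple give five walls meeting at this stratum. A small transverse loop around the stratum crosses each of these five walls twice in the same cyclic order, producing the word $(a_{m^1} a_{m^2} a_{m^3} a_{m^4} a_{m^5})^2$, and this is trivial in $G_n^4$ by precisely the tetrahedron relation \eqref{tetra} combined with \eqref{third}.

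Third, I would compute $\tilde\psi_n(b_{i,j})$ by realising $b_{i,j}$ as a concrete pure braid: along a conjugating path, point $i$ is brought next to point $j$ past the intermediate points $i+1,\dots,j-1$; point $i$ then makes a full loop around point $j$; the conjugating path is reversed. Cataloguing in each phase of the motion the 4-tuples $\{i,j,p,q\}$ that become concyclic and the cyclic order in which this happens --- the three products $c_{ij}^I, c_{ij}^{II}, c_{ij}^{III}$ being designed exactly to enumerate these contributions according to whether $p,q$ lie ``before $j$'', ``straddle $j$'', or ``after $j$'' --- and then assembling the contributions of the three segments of the braid yields
$$\tilde\psi_n(b_{i,j}) = c_{i,i+1}\cdots c_{i,j-1}\, c_{i,j}^2\, c_{i,j-1}^{-1}\cdots c_{i,i+1}^{-1},$$
which is the formula in the statement.

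The main obstacle I expect is the codimension two analysis in step two: showing that the five walls associated with a 5-concyclic stratum really meet in general position inside a small transverse 2-disk, so that the cyclic order of wall crossings on either side of the stratum is well defined and corresponds to the two sides of \eqref{tetra}. This is the direct analogue, one step up in $k$, of the 4-collinear analysis carried out for Proposition \ref{G_n^3} in \cite{2019arXiv190508049M}, and I expect the argument there to transfer once a local model for the walls $W_m$ near the 5-concyclic stratum has been worked out. Given that, the bookkeeping in step three is a finite (if lengthy) counting argument that follows the template of Proposition \ref{G_n^3}.
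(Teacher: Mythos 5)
Your proposal follows exactly the wall-crossing strategy that the paper attributes to \cite{2019arXiv190508049M}: generators read off from transverse crossings of the codimension one ``four concyclic/collinear points'' walls, the relations \eqref{third}, \eqref{far} and \eqref{tetra} accounted for by the codimension two strata (tangencies, intersections of far walls, and the five-concyclic locus where the five walls meet), and a direct bookkeeping computation of the image of $b_{i,j}$ via the elements $c_{ij}^{I}, c_{ij}^{II}, c_{ij}^{III}$. The present paper does not reprove this statement --- it only cites it as Proposition 4 of \cite{2019arXiv190508049M} --- but your outline is the same approach as the one sketched in Section \ref{sec:definitions} and carried out in the cited source, so it is essentially the intended proof.
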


Next, we recall the definition of the groups $\Gamma_n^k$ given in \cite{2019arXiv190211238M,2019arXiv191202695F}.

\begin{defn}
Let $4\le k\le n$. The group $\Gamma_n^k$ is the group with generators 
$$a_{P,Q}, \ P,Q\subset\{1,\dots,n\}, \ P\cap Q=\emptyset, \ |P\cup Q|=k, \ |P|,|Q|\ge2$$
and the relations:
\begin{enumerate}
\item 
$a_{Q,P}=a_{P,Q}^{-1}$;
\item
\emph{far commutativity}: $a_{P,Q}a_{P',Q'}=a_{P',Q'}a_{P,Q}$ for each generators $a_{P,Q}, a_{P',Q'}$ such that 
$$|P\cap(P'\cup Q')|<|P|,\ |Q\cap(P'\cup Q')|<|Q|,$$
$$|P'\cap|P\cup Q)|<|P'|,\ |Q'\cap|P\cup Q)|<|Q'|;$$
\item
\emph{$(k+1)$-gon relations}: for any standard Gale diagram (see \cite{2019arXiv191202695F} for the definition) $\bar{Y}$ of order $k+1$ and any subset $M=\{m_1,\dots,m_{k+1}\}\subset\{1,\dots,n\}$,
$$\prod_{i=1}^{k+1}a_{M_R(\bar{Y},i), M_L(\bar{Y},i)}=1,$$ 
where $M_R(\bar{Y},i)=\{m_j\}_{j\in R_{\bar{Y}}(i)}$, $M_L(\bar{Y},i)=\{m_j\}_{j\in L_{\bar{Y}}(i)}$.
\end{enumerate}
\end{defn}

In particular,
\begin{defn}
The group $\Gamma_n^4$ is the group generated by
$$\{d_{(ijkl)}\mid \{i,j,k,l\}\subset\{1,\dots,n\},|\{i,j,k,l\}|=4\}$$ subject to the following relations:
\begin{enumerate}
\item
$d_{(ijkl)}^2=1$ for $\{i,j,k,l\}\subset\{1,\dots,n\}$,
\item
$d_{(ijkl)}d_{(stuv)}=d_{(stuv)}d_{(ijkl)}$ for $|\{i,j,k,l\}\cap\{s,t,u,v\}|<3$.
\item
$d_{(ijkl)}d_{(ijlm)}d_{(jklm)}d_{(ijkm)}d_{(iklm)}=1$ for distinct $i,j,k,l,m$.
\item
$d_{(ijkl)}=d_{(kjil)}=d_{(ilkj)}=d_{(klij)}=d_{(jkli)}=d_{(jilk)}=d_{(lkji)}=d_{(lijk)}$ for distinct $i,j,k,l$.
\end{enumerate}
\end{defn}

Let us denote
$$d_{\{p,q,(r,s)_s\}}=\left\{\begin{array}{ll}d_{(pqrs)}&\text{if }p<q<s,\\
d_{(prsq)}&\text{if }p<s<q,\\d_{(rspq)}&\text{if }s<p<q,\\d_{(qprs)}&\text{if }q<p<s,\\
d_{(qrsp)}&\text{if }q<s<p,\\d_{(rsqp)}&\text{if }s<q<p.
\end{array}\right.$$

Let us denote $k\in\{p,q,r\}$ such that 
$\text{min}\{p,q,r\}<k<\text{max}\{p,q,r\}$ by $\text{mid}\{p,q,r\}$.

Let us define $\gamma_{\{p,q,(i,j)_j\}}$ as follows:
\begin{enumerate}
\item
If $\text{min}\{p,q,j\}<i<\text{mid}\{p,q,j\}$ or $i>\text{max}\{p,q,j\}$, then
\bea
&&\gamma_{\{p,q,(i,j)_j\}}\nn\\
&=&\left\{\begin{array}{ll}d_{\{p,q,(i,j)_j\}},&\text{if }\text{min}\{p,q,j\}-\text{mid}\{p,q,j\}+\text{max}\{p,q,j\}-2=0,\\
1,&\text{if }\text{min}\{p,q,j\}-\text{mid}\{p,q,j\}+\text{max}\{p,q,j\}-2\ne0.\end{array}\right.
\eea
\item
If $i<\text{min}\{p,q,j\}$ or $\text{mid}\{p,q,j\}<i<\text{max}\{p,q,j\}$, then
\bea
&&\gamma_{\{p,q,(i,j)_j\}}\nn\\
&=&\left\{\begin{array}{ll}d_{\{p,q,(i,j)_j\}},&\text{if }\text{min}\{p,q,j\}-\text{mid}\{p,q,j\}+\text{max}\{p,q,j\}-2=1,\\
1,&\text{if }\text{min}\{p,q,j\}-\text{mid}\{p,q,j\}+\text{max}\{p,q,j\}-2\ne1.\end{array}\right.
\eea
\end{enumerate}

Let $b_{ij}\in PB_n(\R^2)$, $1\le i<j\le n$, be a generator.
Consider the elements
$$\Delta_{i,(i,j)}^I=\prod_{p=2}^{j-1}\prod_{q=1}^{p-1}\gamma_{\{p,q,(i,j)_j\}},$$
$$\Delta_{i,(i,j)}^{II}=\prod_{p=1}^{j-1}\prod_{q=1}^{n-j}\gamma_{\{(j-p),(j+q),(i,j)_j\}},$$
$$\Delta_{i,(i,j)}^{III}=\prod_{p=1}^{n-j-1}\prod_{q=0}^{p-1}\gamma_{\{(n-p),(n-q),(i,j)_j\}},$$
$$\Delta_{i,(i,j)}=\Delta_{i,(i,j)}^{II}\Delta_{i,(i,j)}^I\Delta_{i,(i,j)}^{III}.$$

Now we define $\xi_n:PB_n(\R^2)\to\Gamma_n^4$ by
$$\xi_n(b_{ij})=\Delta_{i,(i,(i+1))}\cdots\Delta_{i,(i,(j-1))}\Delta_{i,(i,j)}\Delta_{i,(j,i)}\Delta_{i,((j-1),i)}^{-1}\cdots\Delta_{i,((i+1),i)}^{-1},$$
for $1\le i<j\le n$.

\begin{thm}[Theorem 20 of \cite{2019arXiv190508049M}]\label{Gamma_n^4}
The map $\xi_n:PB_n(\R^2)\to\Gamma_n^4$, which is defined above, is a homomorphism.
\end{thm}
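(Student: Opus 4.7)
The plan is to verify directly that $\xi_n$ sends each defining relation of $PB_n(\R^2)$ to the trivial word in $\Gamma_n^4$, closely following the pattern established for the $G_n^4$-map in Proposition \ref{G_n^4}, with the extra bookkeeping dictated by the indicator $\gamma_{\{p,q,(i,j)_j\}}$.

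First, I would give the map a geometric interpretation. Realize each generator $b_{ij}$ as an isotopy of the standard configuration of collinear points on the real axis in which point $j$ performs one small loop around point $i$. During such an isotopy one sweeps across a sequence of codimension-one walls in configuration space on which some quadruple of points becomes concyclic. The three factors $\Delta_{i,(i,j)}^{I}$, $\Delta_{i,(i,j)}^{II}$, $\Delta_{i,(i,j)}^{III}$ record the contributions of auxiliary pairs $(p,q)$ lying, respectively, to the left of the interval $[i,j]$, straddling it, and to the right on the real axis, while $\gamma_{\{p,q,(i,j)_j\}}$ selects exactly those quadruples whose concyclicity is a \emph{local} event in the sense required by the $\Gamma_n^4$ theory; the remaining quadruples contribute a trivial factor $1$. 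The product $\Delta_{i,(i,j)}=\Delta_{i,(i,j)}^{II}\Delta_{i,(i,j)}^{I}\Delta_{i,(i,j)}^{III}$ is then the $\Gamma_n^4$-valued word read off the half-loop, and $\xi_n(b_{ij})$ is the symmetrised product corresponding to the full loop.

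Next I would check the Artin--Burau relations of $PB_n(\R^2)$, which fall into the standard four families: pure commutation when $\{i,j\}$ and $\{r,s\}$ are disjoint, and three families of three- and four-index relations. For each such relation one realises the two sides by homotopic isotopies of the configuration, so that the difference between the two recorded words must be a consequence of the $\Gamma_n^4$-relations: $d_{(ijkl)}^{2}=1$, far commutativity, the pentagon relation, and the symmetry (4) on the indexing. The commuting relations reduce at once to $\Gamma_n^4$-far commutativity, once one confirms that the relevant index sets overlap in fewer than three elements. The three-index and four-index relations are then reduced to pentagon identities by tracking the codimension-two strata on which five of the $n$ points become simultaneously cospherical during the homotopy between the two sides.

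The main obstacle is the case analysis hidden in the definition of $\gamma$: one has to show that this indicator discards exactly those concyclicity events which are non-local, so that what survives assembles into the pentagon relation whenever five points become cospherical. The most delicate piece is the four-index relation of $PB_n(\R^2)$, where one must check that the contributions coming from $\Delta^{I}$, $\Delta^{II}$, and $\Delta^{III}$ on the two sides match after applying far commutativity and $d^{2}=1$, with no extraneous generators left over. This is essentially the bookkeeping argument carried out for $G_n^4$ in Proposition \ref{G_n^4}, but requires the case-by-case verification that $\gamma$ is compatible with those manipulations; all of the remaining steps are combinatorial and routine once this compatibility is in place.
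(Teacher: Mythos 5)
The paper does not actually prove this statement: Theorem \ref{Gamma_n^4} is imported verbatim as Theorem 20 of \cite{2019arXiv190508049M}, so there is no internal proof to compare against. Judged on its own, your proposal is a strategy outline rather than a proof, and the gap is that everything which makes the theorem nontrivial is deferred to ``routine bookkeeping.'' To show $\xi_n$ is a homomorphism one must verify that the images of the Artin relations among the $b_{ij}$ become trivial in $\Gamma_n^4$; that verification \emph{is} the theorem, and you never carry out a single case of it. In particular, you assert that the indicator $\gamma_{\{p,q,(i,j)_j\}}$ ``selects exactly those quadruples whose concyclicity is a local event,'' but the definition of $\gamma$ is a purely index-combinatorial condition on whether $\min\{p,q,j\}-\operatorname{mid}\{p,q,j\}+\max\{p,q,j\}-2$ equals $0$ or $1$, together with the position of $i$ relative to $\min$, $\operatorname{mid}$, $\max$. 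Whether a concyclicity event during the loop realizing $b_{ij}$ is an \emph{empty-circle} (Delaunay) event depends on the actual positions of all the other points, not only on the ordering of four indices; so identifying the combinatorial condition with the geometric locality requires fixing a concrete initial configuration and a concrete representative path for $b_{ij}$ and then proving the equivalence. That identification is the heart of the matter and is missing.

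A second, smaller gap: you describe $\xi_n(b_{ij})$ as ``the symmetrised product corresponding to the full loop,'' but the middle of the defining word is $\Delta_{i,(i,j)}\Delta_{i,(j,i)}$, with the second factor built from $\gamma$'s whose case selection is governed by $i$ rather than $j$; it is not simply $\Delta_{i,(i,j)}^2$, and in $\Gamma_n^4$ (unlike $G_n^4$, where $a_m^2=1$) the generators $a_{P,Q}$ are not involutions in general, only the $d_{(ijkl)}$ are. Your reduction of the four-index braid relation to the pentagon relation (3) of $\Gamma_n^4$ is stated as a goal, not demonstrated: one must exhibit, for each braid relation, a specific sequence of far-commutativity moves, applications of $d^2=1$, the dihedral symmetry (4), and pentagon relations that cancels the two sides, and this depends delicately on which $\gamma$'s evaluate to $1$ versus to a generator. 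As written, the proposal is a correct description of what a proof would have to do, but it does not constitute one.
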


The essence of Theorem \ref{Gamma_n^4} is as follows. We  consider points on $\R^{2}$ as vertices of the triangulation, where three points form a triangle if and only if their circumscribed circle contains no other point.

The generators of the group $\Gamma_{n}^{4}$ correspond to those situations where the combinatorial structure of the triangulation changes: four points belong to the same circle whose interior contains no other point.

Here $d_{(ijkl)}$ corresponds to the quadrilateral where $i$ and $k$ are opposite and $j$ and $l$ are opposite.

\begin{rem}\label{spherical}
It is well-known that the center of $PB_n(\R^2)$ is $Z(PB_n(\R^2))=\Z$ generated by $(\sigma_{1}\cdots \sigma_{n-1})^{n}$.
The maps $PB_{n}\to G_n^3,G_n^4, \Gamma_{n}^{4}$ constructed in \cite{2019arXiv190508049M} take the ``rotation'' element $(\sigma_{1}\cdots \sigma_{n-1})^{n}$ to 1,
thus being maps from the quotient group $PB_n(\R^2)/Z(PB_n(\R^2))$. 
Let $PB_n(S^2)$ denote the $n$-strand spherical pure braid group. 
Since there is a short exact sequence \cite{farbmarg}
\bea\label{exact}
1\to \Z/2\Z\to PB_n(S^2)\xrightarrow{\rho_n} PB_{n-1}(\R^2)/Z(PB_{n-1}(\R^2))\to 1,
\eea
we get homomorphisms from the $n$-strand spherical pure braid group $PB_n(S^2)$ to $G_{n-1}^3$, $G_{n-1}^4$, and $\Gamma_{n-1}^4$ via composition of $\phi_{n-1}$, $\psi_{n-1}$, and $\xi_{n-1}$ with $\rho_n$. We denote them by $\phi_n'$, $\psi_n'$, and $\xi_n'$ respectively.

\end{rem}

\section{Outline of the paper}
In this paper, we will first consider $n$ distinct copies of $\CP^1$ in $\CP^{2}$ with degree 1 (defined by linear equations) and study their ``restricted'' moduli space (section \ref{sec:CP^1}). In the restricted moduli space, any two distinct copies of $\CP^1$ have exactly one intersection point, and no three distinct copies of $\CP^1$ have exactly one common intersection point.
So when we consider one copy of $\CP^1$, there are $n-1$ intersection points (with other copies of $\CP^1$) moving on the considered $\CP^1$, and no two points coincide. In addition, if we give the $n$ copies of $\CP^1$ an order, then we get a continuous map from the ``restricted'' moduli space of $n$ ordered distinct copies of $\CP^1$ in $\CP^2$ to the moduli space of $n-1$ ordered distinct points on a $\CP^1=S^2$. This continuous map induces a group homomorphism from the fundamental group of the ``restricted'' moduli space to the $(n-1)$-strand spherical pure braid group. 
By the results in section \ref{sec:definitions}, we get group homomorphisms from the fundamental group of the ``restricted'' moduli space to the groups $G_{n-2}^3$, $G_{n-2}^4$ and $\Gamma_{n-2}^4$.

Next, we will consider $n$ distinct copies of $\CP^m$ in $\CP^{m+1}$ with degree 1 (defined by linear equations) and study their ``restricted'' moduli space (section \ref{sec:CP^m}).
Similarly, we can inductively get group homomorphisms from the fundamental group of the ``restricted'' moduli space of $n$ distinct copies of $\CP^m$ in $\CP^{m+1}$ to the fundamental group of the ``restricted'' moduli space of $n-1$ distinct copies of $\CP^{m-1}$ in $\CP^{m}$. Finally, we combine these results and the results in section \ref{sec:CP^1} and get group homomorphisms from the fundamental group of the ``restricted'' moduli space to the groups $G_{n-m-1}^3$, $G_{n-m-1}^4$ and $\Gamma_{n-m-1}^4$.

As we mentioned in section \ref{sec:introduction}, these group homomorphisms to the groups $G_n^k$ and $\Gamma_n^k$ are invariants which will be useful to our future work.

\section{Main theorem for $\CP^{1}$ in $\CP^{2}$ with degree 1}\label{sec:CP^1}

In this section, we assume that all $\CP^1$ in $\CP^2$ have degree 1.

Consider $n$ distinct copies of $\CP^1$ in $\CP^{2}$. Since $$\CP^{2}=\{[x_0:x_1:x_{2}]\mid (x_0,x_1,x_{2})\in\C^{3}\setminus\{0\}\},$$ each $\CP^1$ can be defined by the equation 
$a_{0,i}x_0+a_{1,i}x_1+a_{2,i}x_{2}=0$ where $(a_{0,i},a_{1,i},a_{2,i})\in\C^{3}\setminus\{0\}$. So each $\CP^1$ corresponds to a point $[a_{0,i}:a_{1,i}:a_{2,i}]$ in $\CP^{2}$ and any other $\CP^1$ correspond to other points in $\CP^{2}$.

For any two distinct $\CP^1$, because $[a_{0,i}:a_{1,i}:a_{2,i}]\ne[a_{0,j}:a_{1,j}:a_{2,j}]$,
 the rank of the matrix $\left(\begin{array}{ccc}a_{0,i}&a_{1,i}&a_{2,i}\\a_{0,j}&a_{1,j}&a_{2,j}\end{array}\right)$ is 2.
So the intersection of each two distinct $\CP^1$ is a point since the linear space of the solutions of the linear equations
\bea
\left\{\begin{array}{l}a_{0,i}x_0+a_{1,i}x_1+a_{2,i}x_{2}=0\\a_{0,j}x_0+a_{1,j}x_1+a_{2,j}x_{2}=0\end{array}\right.
\eea
is of complex dimension $1$.

\begin{defn}
The moduli space $\cM_n^{1}$ is the restricted configuration space of $n$ ordered distinct $\CP^{1}$ in $\CP^{2}$ such that there is no three $\CP^1$ which have exactly one common intersection point.
\end{defn}

\begin{thm}\label{M_n^1}
There is a sequence of homomorphisms:
\begin{center}
\begin{tikzpicture}
\node at (0,1) {$\pi_1(\cM_{n}^1)$};
\node at (4,1) {$PB_{n-1}(S^2)$};

\node at (7,2) {$G_{n-2}^3$,};
\cred{\node at (7,1) {$G_{n-2}^4$,};}
\node at (7,0) {$\Gamma_{n-2}^4$,};

\draw[->] (5,1) -- (6.5,2);
\cred{\draw[->] (5,1) -- (6.5,1);}
\draw[->] (5,1) -- (6.5,0);

\draw[->] (1,1) -- (3,1);

\node[above] at (2,1) {$d_{n,i}^1$};

\node[above] at (5.75,1.75) {$\phi_{n-1}'$};
\cred{\node[above] at (5.75,0.75) {$\psi_{n-1}'$};}
\node[below] at (5.75,0.5) {$\xi_{n-1}'$};

\end{tikzpicture}
\end{center}
Here $1\le i\le n$, $\phi_{n-1}'=\phi_{n-2}\circ \rho_{n-1}$, $\psi_{n-1}'=\psi_{n-2}\circ \rho_{n-1}$, and $\xi_{n-1}'=\xi_{n-2}\circ \rho_{n-1}$ where $\rho_{n-1}:PB_{n-1}(S^2)\to PB_{n-2}(\R^2)/Z(PB_{n-2}(\R^2))$ is defined in \eqref{exact}. 
\end{thm}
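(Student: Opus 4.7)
The right-hand portion of the sequence, namely the homomorphisms $\phi_{n-1}'$, $\psi_{n-1}'$, $\xi_{n-1}'$ out of $PB_{n-1}(S^2)$, is already packaged by Remark \ref{spherical}: it suffices to post-compose the map $\rho_{n-1}: PB_{n-1}(S^2) \to PB_{n-2}(\R^2)/Z(PB_{n-2}(\R^2))$ from the short exact sequence (\ref{exact}) with $\phi_{n-2}$, $\psi_{n-2}$, $\xi_{n-2}$ from Propositions \ref{G_n^3}, \ref{G_n^4} and Theorem \ref{Gamma_n^4}. Hence the heart of the proof is the construction of $d_{n,i}^1: \pi_1(\cM_n^1) \to PB_{n-1}(S^2)$.

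My plan is to realize $d_{n,i}^1$ as the homomorphism induced on fundamental groups by a continuous map $f_i: \cM_n^1 \to \mathrm{Conf}_{n-1}(S^2)$. For each configuration $C = (L_1, \ldots, L_n) \in \cM_n^1$, the $n-1$ intersection points $P_j(C) := L_i(C) \cap L_j(C)$ with $j \neq i$ are pairwise distinct on $L_i(C) \cong \CP^1$ by the restricted condition that no three lines be concurrent. To assemble these into a continuous configuration one needs an identification $L_i(C) \cong S^2$ varying continuously in $C$. I achieve this by Möbius normalization: assume $n \geq 4$, fix three auxiliary indices $j_1, j_2, j_3 \in \{1, \ldots, n\} \setminus \{i\}$, and in each fiber $L_i(C)$ take the unique Möbius transformation $\mu_C: L_i(C) \to \CP^1$ sending $P_{j_1}(C), P_{j_2}(C), P_{j_3}(C)$ to $0, 1, \infty$ respectively. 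Such $\mu_C$ exists and is unique because any three distinct points of $\CP^1$ may be simultaneously mapped to $0, 1, \infty$ in exactly one way, and it depends holomorphically on $C$ since it is given explicitly by a cross-ratio formula. Applying $\mu_C$ pointwise to the tuple $(P_j(C))_{j \neq i}$ defines $f_i: \cM_n^1 \to \mathrm{Conf}_{n-1}(\CP^1) = \mathrm{Conf}_{n-1}(S^2)$, and I set $d_{n,i}^1 := (f_i)_{*}$.

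What remains is to compose $d_{n,i}^1$ with the three right-hand arrows, yielding homomorphisms $\pi_1(\cM_n^1) \to G_{n-2}^3, G_{n-2}^4, \Gamma_{n-2}^4$. The main obstacle is one of well-definedness rather than depth: one must verify that the Möbius trivialization is globally continuous, which reduces to the three reference intersection points $P_{j_1}, P_{j_2}, P_{j_3}$ never colliding with one another and the remaining $P_j$'s never coinciding with them, both guaranteed by the restricted condition defining $\cM_n^1$. A delicate point is that $d_{n,i}^1$ depends on the auxiliary choice $\{j_1, j_2, j_3\}$; different choices produce homomorphisms related by conjugation inside $PB_{n-1}(S^2)$ together with a permutation of strands, so either the paper fixes one specific choice (e.g.\ the three smallest indices distinct from $i$), or the dependence becomes immaterial after passing through $\rho_{n-1}$ and the maps $\phi_{n-2}, \psi_{n-2}, \xi_{n-2}$, since $\rho_{n-1}$ quotients out the central $\Z/2\Z$ in which this ambiguity is concentrated.
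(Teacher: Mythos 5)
Your argument is essentially the paper's: send a configuration of lines to the $n-1$ pairwise distinct intersection points on the $i$-th line (distinctness coming from the no-three-concurrent condition), get a continuous map to the ordered configuration space of $n-1$ points on $S^2$ and hence a homomorphism $d_{n,i}^1$ to $PB_{n-1}(S^2)$, then compose with $\phi_{n-1}'$, $\psi_{n-1}'$, $\xi_{n-1}'$ exactly as in Remark \ref{spherical}. The one point of divergence is that the paper simply asserts the continuous map $f_{n,i}^1$ without explaining how the varying sphere $L_i(C)\cong\CP^1$ is identified with a fixed $S^2$, whereas your cross-ratio normalization supplies that trivialization explicitly; this is a legitimate way to fill in the omitted detail and is harmless for the bare existence claim, even though it pins three of the $n-1$ points at $0,1,\infty$ and so is not literally the map the paper has in mind.
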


\begin{proof}
Consider $n$ ordered distinct $\CP^1$ in $\CP^2$ such that there is no three $\CP^1$ which have exactly one common intersection point. 
Denote them by $\CP^1_i$ for $1\le i\le n$.
For each $\CP^1_i$, there are $n-1$ intersection points with other $\CP^1_j$ ($i\ne j$).
Since there is no three $\CP^1$ which have exactly one common intersection point,
the $n-1$ intersection points are pairwise distinct. So we get $n-1$ distinct points in $\CP^1=S^2$. 

Hence we get continuous maps $f_{n,i}^1$ from the moduli space $\cM_n^{1}$ to the configuration space of $n-1$ ordered distinct points in $S^2$. 
These continuous maps induce 
homomorphisms $d_{n,i}^1$ from $\pi_1(\cM_n^{1})$ to the $n-1$ strand spherical pure braid group.

There is a homomorphism $\rho_{n-1}:PB_{n-1}(S^2)\to PB_{n-2}(\R^2)/Z(PB_{n-2}(\R^2))$ (see \eqref{exact}). 
There are homomorphisms $\phi_{n-2}$  from the $n-2$ strand planar pure braid group to $G_{n-2}^3$, $\psi_{n-2}$ from the $n-2$ strand planar pure braid group to $G_{n-2}^4$ and 
$\xi_{n-2}$ from the $n-2$ strand planar pure braid group to
$\Gamma_{n-2}^4$ (Proposition \ref{G_n^3}, Proposition \ref{G_n^4}, and Theorem \ref{Gamma_n^4}). All these homomorphisms map the center of $PB_{n-2}(\R^2)$ to 1, so they induce homomorphisms from $PB_{n-2}(\R^2)/Z(PB_{n-2}(\R^2))$ to $G_{n-2}^3$, $G_{n-2}^4$, and $\Gamma_{n-2}^4$ (Remark \ref{spherical}), we still denote them by $\phi_{n-2}$, $\psi_{n-2}$, and $\xi_{n-2}$.
 
We denote the composition of $\phi_{n-2}$, $\psi_{n-2}$, and $\xi_{n-2}$ with $\rho_{n-1}$ by $\phi_{n-1}'$, $\psi_{n-1}'$, and $\xi_{n-1}'$ respectively.
They are homomorphisms from the $n-1$ strand spherical pure braid group to $G_{n-2}^3$, $G_{n-2}^4$, and $\Gamma_{n-2}^4$.

 Thus we have proved the theorem.

\end{proof}

\section{Main theorem for $\CP^{m}$ in $\CP^{m+1}$ with degree 1}\label{sec:CP^m}

In this section, we assume that all $\CP^m$ in $\CP^{m+1}$ have degree 1, namely they are defined by linear equations.

Consider $n$ distinct copies of $\CP^m$ in $\CP^{m+1}$. Since $$\CP^{m+1}=\{[x_0:x_1:\cdots:x_{m+1}]\mid (x_0,x_1,\dots,x_{m+1})\in\C^{m+2}\setminus\{0\}\},$$ each $\CP^m$ can be defined by the equation 
$a_{0,i}x_0+a_{1,i}x_1+\cdots+a_{m+1,i}x_{m+1}=0$ where $(a_{0,i},a_{1,i},\dots,a_{m+1,i})\in\C^{m+2}\setminus\{0\}$. So each $\CP^m$ corresponds to a point $[a_{0,i}:a_{1,i}:\cdots:a_{m+1,i}]$ in $\CP^{m+1}$ and any other $\CP^m$ correspond to other points in $\CP^{m+1}$.

For any two distinct $\CP^m$ in $\CP^{m+1}$, because $[a_{0,i}:a_{1,i}:\cdots:a_{m+1,i}]\ne[a_{0,j}:a_{1,j}:\cdots:a_{m+1,j}]$, the rank of the matrix $\left(\begin{array}{cccc}a_{0,i}&a_{1,i}&\cdots&a_{m+1,i}\\a_{0,j}&a_{1,j}&\cdots&a_{m+1,j}\end{array}\right)$ is 2. So
the intersection of each two distinct $\CP^m$ is a $\CP^{m-1}$ since the linear space of the solutions of the linear equations
\bea
\left\{\begin{array}{l}a_{0,i}x_0+a_{1,i}x_1+\cdots+a_{m+1,i}x_{m+1}=0\\a_{0,j}x_0+a_{1,j}x_1+\cdots+a_{m+1,j}x_{m+1}=0\end{array}\right.
\eea
is of complex dimension $m$.

\begin{defn}
The moduli space $\cM_n^{m}$ is the restricted configuration space of $n$ ordered distinct $\CP^{m}$ in $\CP^{m+1}$ such that any $m+2$ copies of $\CP^m$ are in general position.
Namely,
there are no three $\CP^m$ which have exactly one common intersection $\CP^{m-1}$, there are no four $\CP^m$ which have exactly one common intersection $\CP^{m-2}$, $\dots$, and there are no $m+2$ copies of $\CP^m$ which have exactly one common intersection point.
\end{defn}
\begin{rem}
By the projective duality, $n$ distinct copies of $\CP^m$ in $\CP^{m+1}$ are in one-to-one correspondence with $n$ distinct points in $\CP^{m+1}$. Any $m+2$ copies of $\CP^m$ are in general position if and only if the corresponding $m+2$ points are in general position.
\end{rem}

\begin{thm}\label{M_n^m}
There is a sequence of homomorphisms:

\begin{center}
\begin{tikzpicture}
\node at (0,0) {$\pi_1(\cM_{n}^m)$};

\node at (0,-2) {$\pi_1(\cM_{n-1}^{m-1})$};

\node at (0,-4) {$\vdots$};

\node at (0,-6) {$\pi_1(\cM_{n-m+1}^1)$};

\node at (0,-8) {$PB_{n-m}(S^2)$};

\node at (-2,-10) {$G_{n-m-1}^3$,};
\cred{\node at (0,-10) {$G_{n-m-1}^4$,};}
\node at (2,-10) {$\Gamma_{n-m-1}^4$,};

\draw[->] (0,-0.5) -- (0,-1.5);
\draw[->] (0,-2.5) -- (0,-3.5);
\draw[->] (0,-4.5) -- (0,-5.5);

\draw[->] (0,-6.5) -- (0,-7.5);

\draw[->] (0,-8.5) -- (-2,-9.5);
\cred{\draw[->] (0,-8.5) -- (0,-9.5);}
\draw[->] (0,-8.5) -- (2,-9.5);

\node[right] at (0,-1) {$d_{n,i(n)}^m$};
\node[right] at (0,-3) {$d_{n-1,i(n-1)}^{m-1}$};
\node[right] at (0,-5) {$d_{n-m+2,i(n-m+2)}^2$};

\node[right] at (0,-7) {$d_{n-m+1,i(n-m+1)}^1$};

\node[left] at (-1.3,-9) {$\phi'_{n-m}$};
\cred{\node[left] at (0.7,-9) {$\psi'_{n-m}$};}
\node[right] at (1.3,-9) {$\xi'_{n-m}$};

\end{tikzpicture}
\end{center}

Here $1\le i(k)\le k$ for $n-m+1\le k\le n$,
$\phi_{n-m}'=\phi_{n-m-1}\circ \rho_{n-m}$, $\psi_{n-m}'=\psi_{n-m-1}\circ \rho_{n-m}$, and $\xi_{n-m}'=\xi_{n-m-1}\circ \rho_{n-m}$ where $\rho_{n-m}:PB_{n-m}(S^2)\to PB_{n-m-1}(\R^2)/Z(PB_{n-m-1}(\R^2))$ is defined in \eqref{exact}. 

\end{thm}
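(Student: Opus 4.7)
The plan is to induct on $m$, constructing at each level a restriction map $f_{n,i}^m: \cM_n^m \to \cM_{n-1}^{m-1}$, and then to concatenate these maps, finishing off with Theorem \ref{M_n^1}. Concretely, given a configuration $(\CP^m_1, \ldots, \CP^m_n)$ in $\cM_n^m$ and a choice $i \in \{1,\ldots,n\}$, I form the $n-1$ subvarieties $\CP^m_j \cap \CP^m_i$ for $j \ne i$; as observed in the text preceding the definition of $\cM_n^m$, each is a copy of $\CP^{m-1}$ sitting inside $\CP^m_i \cong \CP^m$. Retaining the order inherited from $\{1,\ldots,n\}\setminus\{i\}$, this yields an ordered $(n-1)$-tuple of copies of $\CP^{m-1}$ in $\CP^m$, which I claim lies in $\cM_{n-1}^{m-1}$.

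The nontrivial point is that the $n-1$ restricted hyperplanes inside $\CP^m_i$ satisfy the general-position condition defining $\cM_{n-1}^{m-1}$, namely that any $(m-1)+2 = m+1$ of them are in general position. Pick $k+1 \le m+1$ of the $\CP^m_{j_\ell}$'s. Their common intersection inside $\CP^m_i$ equals $\CP^m_i \cap \CP^m_{j_1} \cap \cdots \cap \CP^m_{j_{k+1}}$, the intersection of $k+2 \le m+2$ hyperplanes in $\CP^{m+1}$. By the general-position hypothesis defining $\cM_n^m$, this intersection has the generic complex dimension $m+1-(k+2) = m-k-1$, which is exactly the expected dimension for $k+1$ hyperplanes in general position inside $\CP^m_i$. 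Hence the resulting tuple indeed lies in $\cM_{n-1}^{m-1}$. Continuity of $f_{n,i}^m$ is immediate once each $\CP^m_j$ is identified with the point $[a_{0,j}:\cdots:a_{m+1,j}]$ of the dual $\CP^{m+1}$: restricting the linear form defining $\CP^m_j$ to a system of coordinates on $\CP^m_i$ is polynomial in the coefficients $a_{\bullet,j}$. Passing to $\pi_1$ (after fixing basepoints) yields the required homomorphism $d_{n,i}^m: \pi_1(\cM_n^m) \to \pi_1(\cM_{n-1}^{m-1})$.

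With these restriction maps in hand, the diagram in the statement is assembled by iteration. Starting from $\pi_1(\cM_n^m)$, one applies $d_{n,i(n)}^m$, then $d_{n-1,i(n-1)}^{m-1}$, and so on; after $m-1$ steps one lands in $\pi_1(\cM_{n-m+1}^1)$. Theorem \ref{M_n^1} then supplies the final descent $d_{n-m+1,i(n-m+1)}^1$ into $PB_{n-m}(S^2)$, after which the three homomorphisms $\phi'_{n-m}$, $\psi'_{n-m}$, $\xi'_{n-m}$ into $G_{n-m-1}^3$, $G_{n-m-1}^4$, and $\Gamma_{n-m-1}^4$ are provided by Propositions \ref{G_n^3} and \ref{G_n^4}, Theorem \ref{Gamma_n^4}, and Remark \ref{spherical}, exactly as in the proof of Theorem \ref{M_n^1}. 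The main obstacle is the general-position verification carried out in the previous paragraph; everything else is bookkeeping about orderings and basepoints.
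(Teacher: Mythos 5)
Your proposal is correct and follows essentially the same route as the paper: construct the restriction map $f_{n,i}^m:\cM_n^m\to\cM_{n-1}^{m-1}$ by intersecting with $\CP^m_i$, check that the general-position condition is inherited, pass to $\pi_1$, iterate, and finish with Theorem \ref{M_n^1}. Your explicit dimension count for the general-position verification and the remark on continuity via projective duality are in fact more detailed than the paper's own argument, which simply asserts the corresponding implication.
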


\begin{proof}
Consider $n$ ordered distinct $\CP^m$ in $\CP^{m+1}$ in $\cM_n^m$.
Denote them by $\CP^m_i$ for $1\le i\le n$.
 For each $\CP^m_i$, there are $n-1$ intersections $\CP^{m-1}$ with other $\CP^m_j$ ($i\ne j$).
Since there are no three $\CP^m$ whose intersection is the same $\CP^{m-1}$,
the $n-1$ intersection $\CP^{m-1}$ are pairwise distinct. So we get $n-1$ distinct copies of $\CP^{m-1}$ in $\CP^m$.

Since there are no four $\CP^m$ whose intersection is the same $\CP^{m-2}$, $\dots$, and there are no $m+2$ copies of $\CP^m$ which have exactly one common intersection point, this implies that there are no three $\CP^{m-1}$ whose intersection is the same $\CP^{m-2}$, $\dots$, and there are no $m+1$ copies of $\CP^{m-1}$ which have exactly one common intersection point. So the $n-1$ ordered distinct $\CP^{m-1}$ in $\CP^m$ are actually in $\cM_{n-1}^{m-1}$.

Hence we get continuous maps $f_{n,i}^m$ from the moduli space $\cM_n^{m}$ to the moduli space $\cM_{n-1}^{m-1}$. 
These continuous maps induce homomorphisms $d_{n,i}^m$ from $\pi_1(\cM_n^{m})$ to $\pi_1(\cM_{n-1}^{m-1})$.

Thus by Theorem \ref{M_n^1}, we have proved the theorem.

\end{proof}

\section*{Acknowledgements}
The first named author is supported by the Laboratory of Topology and Dynamics, Novosibirsk State University (grant No. 14.Y26.31.0025 of the government of the Russian Federation). The second named author is supported by the Shuimu Tsinghua Scholar Program.

 \bibliographystyle{abbrv}
\bibliography{G_n^k-Gamma_n^k.bib}

\end{document}